\documentclass[12pt]{article}

\usepackage[T1]{fontenc}
\usepackage{amsmath}
\usepackage{amssymb}
\usepackage{amsthm}
\usepackage{mathtools}
\usepackage{url}
\usepackage[T1]{fontenc}
\usepackage{lmodern}
\usepackage[utf8]{inputenc}              
\interfootnotelinepenalty=10000

\usepackage{bigfoot} 
\usepackage{filecontents}

\usepackage{pgfplots}	
\pgfplotsset{compat=newest} 
\usepackage{geometry}
\geometry{a4paper,top=2cm,bottom=3cm,left=3cm,right=3cm}
\linespread{1.3}

\usepackage{leftidx}

\usepackage{hyperref}
\hypersetup{
	pdftoolbar=true,
	pdfmenubar=true,
	pdffitwindow=false,
	pdfstartview={FitH},
	pdftitle={},
	pdfauthor={Dario Corona},
	pdfsubject={},
	pdfkeywords={},
	pdfnewwindow=true,
	colorlinks=true, 
	linkcolor=blue,
	citecolor=blue,
	urlcolor=blue,
}



\DeclarePairedDelimiter{\abs}{|}{|}

\theoremstyle{plain}\newtheorem{teo}{Theorem}
\theoremstyle{plain}\newtheorem{proposition}[teo]{Proposition}
\theoremstyle{plain}\newtheorem{lemma}[teo]{Lemma}
\theoremstyle{plain}\newtheorem{corollary}[teo]{Corollary}
\theoremstyle{plain}\newtheorem{conjecture}[teo]{Conjecture}

\theoremstyle{definition}\newtheorem{definition}[teo]{Definition}
\theoremstyle{remark}\newtheorem{remark}[teo]{Remark}
\theoremstyle{definition}

\numberwithin{teo}{section} 
\numberwithin{equation}{section} 


\usepackage{color}

\title{Dynamical properties of critical exponent functions
}



\author{Dario Corona, Alessandro Della Corte, Marco Farotti
	\\
	\small{School of Science and Technology,
University of Camerino (Italy)}}
\date{}

\begin{document}
\maketitle

\begin{abstract}

In the last years the attention towards topological dynamical properties of highly discontinuous maps has increased significantly. In \cite{Critical}, a class of densely discontinuous interval maps, called ``critical exponent maps'', was introduced. These maps are defined through the word-combinatorics concept of critical exponent applied to the binary expansion of reals and show highly chaotically properties as well as some challenging problems. In this paper we identify an error in the proof of Theorem 7 in \cite{Critical}, a purely combinatorial result which in fact does not hold. We show that most of the results in \cite{Critical}, obtained there through Theorem 7, can be recovered. Moreover, we propose as a conjecture a weaker form of Theorem 7.
	
	\textbf{Keywords}: Critical exponent of a word; topological dynamics; Baire classes; topological entropy.

	\textbf{MSC 2020}: 37B40, 37B20, 68R15.

\end{abstract}
\section{Introduction}

In recent decades, emphasis has been placed on the concept of
\textit{critical exponent} of a word $w$ over a given alphabet, that is the
supremum of the reals $\alpha$ for which $w$ contains an $\alpha$-power.
The critical exponents are significant in fields such that symbolic dynamics and transcendental number theory \cite{adamczewski2007dynamics,adamczewski2010expansion}.
In particular, this concept is closely linked to the problem of repetition threshold of an alphabet see \cite[p. 126]{berstel2008combinatorics}), which was solved quite recently \cite{currie2011pr,queffelec2006old,Rao2011}.

In \cite{Critical}, a family of interval maps based
on the word-combinatorial concept of critical exponent was introduced. 
The generic $n$-\textit{critical exponent function}
$\kappa_n:[0,1]\to[0,1]$ is the map that associates to every $x\in [0,1]$, the inverse of the critical exponent of the (infinite) base-$n$ expansion of $x$
(we assume $1/\infty=0$).  A word-combinatorial result,
namely~\cite[Theorem 7]{Critical}, was presented
and employed in that paper to prove
some analytical and dynamical properties of 
such interval maps.
However,~\cite[Theorem 7]{Critical} does not hold.
Denoting $\{0,1\}^*$ and $\{0,1\}^\omega$ as the sets of
finite and infinite binary words, respectively,
and using $E(w)$ to represent the critical exponent of a word
(see Definition~\ref{def:crit-exp}),
Theorem 7 in \cite{Critical} was formulated as follows:
\begin{center}
``\textit{Let $w \in \left\{0,1\right\}^*$.
For every $\alpha > \max\left\{2,E(w)\right\}$,\\
there exists $y \in \left\{0,1\right\}^\omega$ such that
$E(wy) = \alpha$}''.
\end{center}
This claim is false, as shown for instance by the counterexample $w=00100100$.
Indeed, in this case we have $E(w)=8/3$,
while both $w0$ and $w1$ are 3-powers,
so that $E(wy)\ge 3 > 8/3$ for every $y\in\{0,1\}^\omega$.
We thank Sa\'ul Rodr\'iguez Mart\'in for 
this remark. 

We think that 
a result of similar flavor
is true
(see Conjecture~\ref{conj:ourCon}). Even if in the present work we managed to walk around the use of \cite[Theorem 7]{Critical} to establish the main properties of the critical exponent maps, we believe that the conjecture is interesting in itself. 

The paper is organized as follows. In Section~\ref{sec:preliminaries}, we introduce
the preliminary concepts and the notation that will be used throughout 
the paper.
Section~\ref{sec:combinatorial} presents
our conjecture that would replace~\cite[Theorem 7]{Critical},
together with some remarks and weaker results 
useful in the subsequent analysis.
Section~\ref{sec:properties} introduces the critical exponent functions
and examines their properties,
taking into account the error identified in~\cite[Theorem 7]{Critical}.
Finally, we highlight the claims made in \cite{Critical} which are still not
recovered.

\section{Setting and notation}\label{sec:preliminaries}
We denote the set of all natural numbers (i.e.  positive integers) by $\mathbb{N}$ and we set $\mathbb{N}_0=\mathbb{N}\cup \{0\}$. 
Let $\mathcal{A} = \left\{a_0,a_1,\dots, a_{n}\right\}$ be an alphabet. We indicate
by $\mathcal{A}^{*}$ and $\mathcal{A}^\omega$
the set of the finite and infinite words over $\mathcal{A}$, respectively.  We set 
$\mathcal{A}^\infty = \mathcal{A}^* \cup \mathcal{A}^{\omega}$.
We let $\epsilon$ denote the empty word
and we set $\mathcal{A}^+ = \mathcal{A}^*\setminus\{\epsilon\}$.
Given a word $w \in \mathcal{A}^{\infty}$,
we indicate by $w_i$ its $i$-th digit.
Let $\ell\colon \mathcal{A}^* \to \mathbb{N}$ be the map that associates
to the nonempty finite word $w=w_1\dots w_n$ the natural number $n$,
while we set $\ell(\epsilon)=0$.
For $v, w \in \mathcal{A}^\infty$, we say that $v$ is a \textit{subword} of $w$ if there exist
$p \in \mathcal{A}^*$ and
$s \in \mathcal{A}^\infty$ 
such that $w = p v s$.
We say that $v \in \mathcal{A}^+$
is a \textit{prefix} (\textit{suffix}) of $w$
if $w = v s$ ($w = p v$).
For $w \in \mathcal{A}^{\omega}$,
we indicate by $\mathcal{L}(w)$ the \textit{language} of $w$,
that is the set  of all the finite subwords of $w$. 

For every $w \in \mathcal{A}^*$ and $n \in \mathbb{N}$,
we indicate by $w^n$ the concatenation of $n$-copies of $w$,
that is
\[
	w^n = \underbrace{w w \dots w}_{\text{$n$-times}}.
\]
We call \textit{deletion operator} the function $\delta\colon \mathcal{A}^\infty \to \mathcal{A}^\infty$ that removes the first digit of a word.
Hence, for a word $w = w_0 w_1 w_2   \ldots \in \mathcal{A}^{\infty}$, we have that
\[
	\delta(w) = \delta(w_0 w_1 w_2  \dots) = w_1 w_2 \dots \ 
\]
and we set $\delta(\epsilon) = \epsilon$.
Consequently,  we can define for every $n \in \mathbb{N}$,
the operator $\delta^n$ that removes the first $n$ letters of a word. Thus,  if $w = w_0 w_1 w_2   \ldots \in \mathcal{A}^{\infty}$, we have that
\[
	\delta^n(w)= 
	\delta^n(w_0 w_1 \dots\ w_n w_{n+1} \dots )
	= w_n w_{n+1} \dots \ .
\]
For every $x \in [0,1]$,  we denote by $w_x \in \left\{0,1\right\}^\omega$ the binary expansion of $x$, that is
\[
	x = (0.w_x)_2 = \sum_{i = 0}^{\infty}\frac{(w_x)_{i}}{2^i}.
\]
\begin{remark}
	If $x \in [0,1]$ is a dyadic rational,
	we consider its binary expansion $w_x$, whose digits are ultimately $1$,
	for instance $w_{\frac{1}{2}} = 01111\dots$.
\end{remark}
For any finite binary word $w\in\{0,1\}^+$,
we denote by $I_w$ the open cylinder characterized
by $w$, hence
$ I_w \coloneqq \left((0.w)_2,\, (0.w)_2 + 2^{-\ell(w)}\right) $.

\bigskip
For positive integers $p$ and $q$,
we say that a word $w \in \mathcal{A}^+$ is a \textit{$p/q$-power} 
if it is of the form
$w = x^n y$, where $x \in \mathcal{A}^+$, $y\in\mathcal{A}^*$ is a prefix of $x$, $\ell(w) = p$ and $\ell(x) = q$ (cf. \cite{Dejean1972}).
Let $\alpha \in \mathbb{Q}^+$.
A word
$w\in \mathcal{A}^\infty\setminus\{\epsilon\}$
\emph{avoids $\alpha$-powers}
if none of its subwords is an $r$-power
for any rational $r\ge \alpha$.
Otherwise, we say that $w$ \textit{contains an $\alpha$-power}.
\begin{definition}
	\label{def:crit-exp}
	The \textit{critical exponent} of a word 
	is the function $E \colon \mathcal{A}^\infty \to \mathbb{R}$, given by 
	\begin{equation*}
		E(w) =
		\begin{cases}
			\sup\left\{r \in \mathbb{Q}:
				w \text{ contains an $r$-power}	 
			\right\}, & \mbox{if } w \ne \epsilon,\\
			0, &		\mbox{if } w = \epsilon. 
		\end{cases}
	\end{equation*}
\end{definition}


Let us define the well known Thue-Morse sequence, which is important for fields such as number theory, differential geometry,  combinatorics of words,  and Morse theory (\cite{allouche1999ubiquitous}). 

\begin{definition}
	\label{def:ThueMorse}
	The \textit{Thue-Morse sequence} is the element
	$\tau = \tau_0 \tau_1 \tau_2 \dots \tau_n
	\ldots \in \left\{0,1\right\}^\omega$
	given by 
	\[
		\begin{cases}
			\tau_0 = 0, \\
			\tau_{2n} = \tau_n, \\
			\tau_{2n +1 } = 1 - \tau_n.
		\end{cases}
	\]
	The Thue-Morse sequence starts as follows:
	\[
		\tau = 0\ 1\ 10\ 1001\ 10010110\
		1001011001101001\ \dots\  .
	\]
	It has been proved that $ E(\tau) = 2$ (see the classical paper  \cite{thue1912}).
\end{definition}

\begin{remark}
	\label{rem:ThueMorse-bitwiseConstruction}
	By recursively using bitwise negation, an alternative
	construction can be employed to define the Thue-Morse sequence.
	The first digit of the sequence is $\tau_0 = 0$.
	Then, for any $n\in\mathbb{N}_0$,  if the first $2^n$ digits are given,
	then the next $2^n$ digits of the sequence are
	the bitwise negations of the formers.
	In other words, if we have
	$\tau_0,\tau_1,\dots,\tau_{2^n - 1}$,
	then
	$\tau_{k+2^n} = 1 - \tau_{k}$
	for every $k = 0,\dots,2^{n}-1$.
\end{remark}

\section{A conjecture and some combinatorial results}
\label{sec:combinatorial}

In this section, we present our conjecture replacing the ~\cite[Theorem 7]{Critical} along with some remarks and weaker results. The latter results, expressed in Proposition \ref{prop:w-subword-TM} and \ref{prop:weakTheorem},  will be useful for the analysis of dynamical and analytical properties of critical exponent functions done in the next section.

In order to present the conjecture, we give the following notation.
\begin{definition}
For any $w \in \{0,1\}^*$,
we denote by $\mathcal{P}(w)\subset \{0,1\}^{\omega}$
the set of binary words of infinite length with prefix $w$,
hence 
\begin{equation}
	\label{eq:defPw}
	\mathcal{P}(w) \coloneqq \left\{
		wy: y \in \{0,1\}^{\omega}
	\right\},
\end{equation}
and we denote by $\mathcal{E}_w$
the infimum among all the critical exponent 
of the words in $\mathcal{P}(w)$, namely
\begin{equation}
	\label{eq:def-calEw}
	\mathcal{E}_w \coloneqq
	\inf_{z \in \mathcal{P}(w)} E(z).
\end{equation}
\end{definition}

\begin{conjecture}
	\label{conj:ourCon}
	For any $w \in \{0,1\}^*$
	and $\alpha \ge \mathcal{E}_w$,
	there exists $y \in \{0,1\}^\omega$
	such that
	$E(wy) = \alpha$.
	In other words,
	\[
		E(\mathcal{P}(w)) =
		\left\{ E(z): z \in \mathcal{P}(w) \right\}
		=
		[\mathcal{E}_{w},+\infty].
	\]
\end{conjecture}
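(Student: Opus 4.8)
The plan is to prove the identity $E(\mathcal{P}(w)) = [\mathcal{E}_w,+\infty]$ by the two inclusions. The inclusion $E(\mathcal{P}(w)) \subseteq [\mathcal{E}_w,+\infty]$ is immediate from the definition of $\mathcal{E}_w$ as an infimum. For the reverse inclusion I would first dispose of the two extreme values $\mathcal{E}_w$ and $+\infty$, and then treat the interior $(\mathcal{E}_w,+\infty)$ by an explicit construction. Throughout I would use the classical fact that squares are unavoidable over a binary alphabet, so that $E(z) \ge 2$ for every $z \in \{0,1\}^\omega$, and in particular $\mathcal{E}_w \ge 2$; this guarantees $\alpha \ge 2$ in every case and lets me use low-exponent backbones freely.

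The value $+\infty$ is attained by $w0^{\omega}$ (the word $w$ followed by infinitely many zeros), whose suffix contains arbitrarily long runs of $0$'s, so that $E(w0^{\omega}) = +\infty$. To attain the infimum $\mathcal{E}_w$ I would exploit the compactness of $\{0,1\}^\omega$ in the product topology together with the lower semicontinuity of $E$. The key lemma is that if $z^{(k)} \to z$ then $E(z) \le \liminf_k E(z^{(k)})$: indeed, for every rational $r < E(z)$ the word $z$ contains an $r$-power as a subword, this subword occupies finitely many positions, hence $z^{(k)}$ contains it as well for all large $k$, so that $E(z^{(k)}) \ge r$ eventually and $\liminf_k E(z^{(k)}) \ge r$; letting $r \uparrow E(z)$ gives the claim. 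Since $\mathcal{P}(w)$ is closed, a minimizing sequence $z^{(k)} \in \mathcal{P}(w)$ with $E(z^{(k)}) \to \mathcal{E}_w$ admits a convergent subsequence with limit $z^{\ast} \in \mathcal{P}(w)$, and lower semicontinuity forces $E(z^{\ast}) \le \mathcal{E}_w$; combined with $E(z^{\ast}) \ge \mathcal{E}_w$ this yields $E(z^{\ast}) = \mathcal{E}_w$.

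For a target $\alpha \in (\mathcal{E}_w,+\infty)$ I would construct $y$ so that the supremum of the exponents of all subwords of $wy$ equals $\alpha$. Fix rationals $\alpha_n \uparrow \alpha$ with $\alpha_n < \alpha$, and build $y$ as an alternating concatenation $S_0 P_1 S_1 P_2 S_2 \cdots$, where each $P_n$ is a primitive power realizing $E(P_n) = \alpha_n$ (take a primitive block $B_n$ and set $P_n = B_n^{\lfloor \alpha_n \rfloor} B_n'$, with $B_n'$ the prefix of $B_n$ of the appropriate length, and let the periods $\ell(B_n) \to \infty$ so that the fractional parts approximate $\alpha - \lfloor \alpha \rfloor$ from below), and each $S_n$ is a long factor of the overlap-free Thue--Morse sequence $\tau$ used as an aperiodic separator. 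Because $\tau$ is overlap-free and each $B_n$ is chosen to be a low-exponent primitive block, every subword lying inside a single $S_n$ or inside a single $P_n$ has exponent at most $\max\{2,\alpha_n\} \le \alpha$. As the exponents $\alpha_n$ increase to $\alpha$ we obtain $E(wy) \ge \alpha$, and if in addition no subword straddling a seam exceeds $\alpha$, then $E(wy) = \alpha$.

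The main obstacle is precisely the control of the seams: one must show that no subword overlapping two consecutive pieces is an $r$-power with $r > \alpha$, where the relevant junctions are those between an $S_n$ and an adjacent $P_n$, between the prefix $w$ and the first separator, and between two consecutive copies of $B_n$ inside $P_n$. A repetition straddling a seam forces a strong synchronization between the periodic block $P_n$ and the aperiodic separators, so the heart of the argument would be a combinatorial lemma showing that, if the separators $S_n$ are chosen long enough relative to the periods $\ell(B_n)$, any periodicity created at a seam is too short to reach exponent $\alpha$. It is worth stressing that this is exactly the kind of boundary interaction whose mishandling produced the error in \cite[Theorem~7]{Critical}, so particular care is needed to verify that the controlled supremum is attained exactly at $\alpha$ and does not overshoot; the weaker statements announced above (Propositions~\ref{prop:w-subword-TM} and~\ref{prop:weakTheorem}) are meant to supply the building blocks of this seam analysis, and I expect them to be the technical backbone of a full proof.
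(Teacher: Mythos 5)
First, note that the statement you are proving is stated in the paper only as Conjecture~\ref{conj:ourCon}; the paper gives no proof and explicitly says its validity remains uncertain, so your attempt must stand on its own. Parts of it do: the inclusion $E(\mathcal{P}(w))\subseteq[\mathcal{E}_w,+\infty]$ is indeed immediate, $+\infty$ is realized by $w0^\omega$, and your compactness argument for attaining $\mathcal{E}_w$ is correct and self-contained --- $E$ is lower semicontinuous on $\{0,1\}^\omega$ with the product topology because an $r$-power witnessing $E(z)>r-\varepsilon$ occupies finitely many positions and therefore persists in any word agreeing with $z$ on a long enough prefix, and $\mathcal{P}(w)$ is a compact cylinder. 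This actually settles a point the paper leaves open (it remarks that even the attainment of the infimum ``does not appear straightforward''), and is worth isolating as a lemma.

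The core of the conjecture, however, is the realization of every $\alpha\in(\mathcal{E}_w,+\infty)$, and there your proposal has an essential gap that you yourself flag: the ``combinatorial lemma'' controlling the seams is not a deferred technicality but the open problem itself. The decisive seam is the one between the fixed prefix $w$ and the first separator $S_0$, and it is fatal precisely in the regime $\alpha$ close to $\mathcal{E}_w$: you have no freedom in $w$, and appending a Thue--Morse factor to an arbitrary $w$ can already create a power of exponent strictly larger than $\alpha$ --- this is exactly the mechanism behind the counterexample $w=00100100$ to \cite[Theorem~7]{Critical}, where \emph{every} one-letter continuation produces a cube. For $\alpha$ near $\mathcal{E}_w$ the continuation must initially track a near-minimizing word $z^\ast\in\mathcal{P}(w)$, about whose structure nothing is known, and one must then splice in the blocks $P_n$ without any junction exceeding $\alpha$; no argument for this is supplied, and the auxiliary results you invoke (Propositions~\ref{prop:w-subword-TM} and~\ref{prop:weakTheorem}) cover only the cases where $w$ is a Thue--Morse factor or where $\alpha\ge\ell(w)$, far from the general regime $\alpha\approx\mathcal{E}_w$. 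A secondary but real issue is the claim $E(P_n)=\alpha_n$ for $P_n=B_n^{\lfloor\alpha_n\rfloor}B_n'$ with $B_n$ merely primitive: such words can contain powers of exponent larger than $\alpha_n$ arising from conjugate periods, which is why the construction in \cite{currie2008each} is as delicate as it is. As it stands, the proposal does not prove the conjecture, and the statement should still be regarded as open.
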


Even proving that for any $w\in \{0,1\}^*$
the infimum $\mathcal{E}_w$ is attained
does not appear straightforward.
However, this is not the primary focus of the result.
In particular, we find the most intriguing aspect to be
the proof that $E(\mathcal{P}(w))$ cannot be a disconnected set;
hence, it must be either
$E(\mathcal{P}(w)) = (\mathcal{E}_w,+\infty]$
or $E(\mathcal{P}(w)) = [\mathcal{E}_w,+\infty]$.

Notice that the previous conjecture holds true
if $w = \epsilon$,
as proved by 
J.~D. Currie and N.~Rampersad
in~\cite{currie2008each}.
Moreover, by using the construction 
of~\cite{currie2008each},
it holds also whenever $w$ is a finite
subword of the Thue-Morse sequence.
More formally, we have the following result, which in itself is an (easy) generalization of the result proved by Currie and Rampersad.
\begin{proposition}
	\label{prop:w-subword-TM}
	Let $w\in\{0,1\}^*$ be a finite subword of the 
	Thue-Morse sequence.
	Then, for any $\alpha \ge 2$
	there exists $y \in \{0,1\}^\omega$
	such that $E(wy) = \alpha$.
\end{proposition}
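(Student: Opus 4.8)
The plan is to reduce the statement to the construction of Currie and Rampersad in~\cite{currie2008each} by exploiting that $w$, being a factor of the Thue--Morse sequence $\tau$, can itself serve as an initial block of that construction. First I would fix a factorization $\tau = p\,w\,s$ with $p\in\{0,1\}^*$ and $s\in\{0,1\}^\omega$, which exists precisely because $w$ is a subword of $\tau$. Since every subword of $w$ is a subword of $\tau$, we have $E(w)\le E(\tau)=2\le \alpha$, so the prefix $w$ on its own can never produce an exponent exceeding $\alpha$. I would then split the analysis according to whether $\alpha=2$, $2<\alpha<+\infty$, or (reading $\alpha\ge 2$ as allowing the value $+\infty$, consistently with Conjecture~\ref{conj:ourCon}) $\alpha=+\infty$.

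The two extreme cases are direct. For $\alpha=2$, take $y=s$, so that $wy=\delta^{\ell(p)}(\tau)$ is a shift of $\tau$; since every shift of $\tau$ has the same language as $\tau$ (the Thue--Morse subshift being minimal), it follows that $E(wy)=E(\tau)=2$. This also yields $\mathcal{E}_w\le 2$, and since every infinite binary word has critical exponent at least $2$, in fact $\mathcal{E}_w=2$. For $\alpha=+\infty$, take $y=0^\omega$: the word $w0^\omega$ contains $0^n$ for every $n$, hence contains $n$-powers for all $n$, so $E(wy)=+\infty$.

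The core case $2<\alpha<+\infty$ is where~\cite{currie2008each} enters. That construction builds an infinite binary word of critical exponent exactly $\alpha$ by concatenating a sequence of Thue--Morse factors so as to create repetitions whose exponents increase to $\alpha$, while certifying that no subword is a $\beta$-power for any $\beta>\alpha$. The plan is to run this very construction with $w$ taken as its first block: since $w$ is a factor of $\tau$ it is an admissible block, and the resulting word has the form $wy$. If the exponent bookkeeping of~\cite{currie2008each} is insensitive to the choice of first block, then the subwords living in the tail realize exactly the exponents it controls (so their supremum is $\alpha$ and none exceeds $\alpha$), the subwords lying inside $w$ have exponent at most $2$, and there is no separate junction to analyze because $w$ is genuinely part of the word; hence $E(wy)=\alpha$.

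The main obstacle is therefore to show that the Currie--Rampersad recipe can be initialized at the prescribed factor $w$ without disturbing its control of the critical exponent, that is, that prescribing the first block creates no repetition of exponent exceeding $\alpha$ overlapping the boundary between $w$ and the blocks chosen afterwards. I expect to handle this by inserting, immediately after $w$, a sufficiently long Thue--Morse buffer $b$ before the first block dictated by the construction: being overlap-free, Thue--Morse factors have uniformly bounded local periodicity, so a long enough buffer prevents any short period from propagating across the junction, while the near-$\alpha$-powers responsible for the value $\alpha$ lie strictly beyond the buffer and are left untouched. Making this quantitative --- pinning down how long the buffer must be in terms of $\alpha$ and $\ell(w)$, and checking that it leaves the avoidance argument of~\cite{currie2008each} intact --- is the one genuinely technical point, and is exactly what the word ``easy'' in the statement is meant to gloss over.
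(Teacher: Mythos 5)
Your treatment of the extreme cases $\alpha=2$ (take $y=s$ where $\tau=pws$) and $\alpha=+\infty$ (take $y=0^\omega$) is fine and matches the paper in spirit. But in the core case $2<\alpha<+\infty$ your argument has a genuine gap, and you have in fact located it yourself: everything hinges on showing that no repetition of exponent exceeding $\alpha$ straddles the seam between $w$ (plus your buffer $b$) and the Currie--Rampersad tail, and this is never carried out. The proposed remedy --- make the buffer ``sufficiently long'' so that ``no short period propagates across the junction'' --- does not address the actual danger. A harmful repetition at the seam has small period and small total length, so its existence depends only on the few letters on either side of the junction, not on how long the buffer is; and since $\alpha$ may be arbitrarily close to $2$, even a single $7/3$-power (say) created where $b$ meets the tail is already fatal. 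Concatenating two overlap-free words can easily produce such powers, so overlap-freeness of the buffer buys you nothing here. (The paper's own counterexample to the original Theorem~7, $w=00100100$ with both $w0$ and $w1$ being cubes, is a reminder of exactly how unforgiving this kind of junction is.) As written, the ``one genuinely technical point'' you defer is the entire content of the proposition.

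The paper's proof avoids creating a junction at all, and this is the idea you are missing. Every suffix of the Currie--Rampersad word $z$ has critical exponent exactly $\alpha$, because $z$ attains its exponent only ``at infinity''; so it suffices to arrange that some suffix of $z$ \emph{begins} with a prefix $p$ of $\tau$ long enough to contain $w$, and then shift further to land on $w$. This is done by tuning only the first block of the construction: with $r_1=2$, $t_1=5$ and $s_1$ large (so that $2^{s_1}>\ell(p)$ and $\beta_1=2-5/2^{s_1}<\alpha$), one gets $\phi_1(\epsilon)=\delta^{5}\bigl(\tau_{(s_1)}\bigr)\tau_{(s_1)}$, whose suffix of length $2^{s_1}$ is literally the Thue--Morse prefix $\tau_{(s_1)}$. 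Hence $\delta^{2^{s_1}-5}(z)$ starts with $\tau_{(s_1)}$, contains $w$ at a known offset, and every further shift still has critical exponent $\alpha$. The occurrence of $w$ is found \emph{inside} the word whose exponent is already certified, so there is no boundary to analyze. If you want to salvage your route, you would have to redo the avoidance analysis of Currie and Rampersad for the modified word $wb\cdot(\text{tail})$, which is substantially harder than the shift argument above.
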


\begin{remark}
	\label{rem:currieConstruction}
	To provide a clearer and more readable proof of
	Proposition~\ref{prop:w-subword-TM}, let us briefly review
	the key points we need from the construction presented
	in~\cite{currie2008each} for a word
	$z \in \{0,1\}^{\omega}$ with critical exponent $\alpha$,
	for any $\alpha \ge 2$.
	The interest reader can find the details in the original paper.

	If $\alpha = 2$, then just consider the Thue-Morse sequence,
	namely $z = \tau$.

	Otherwise, let $(\beta_i)_{i \in \mathbb{N}}\subset \mathbb{R}$
	a sequence of real numbers converging to $\alpha$
	such that for any $i \in \mathbb{N}$
	we have
	$\beta_i < \alpha$ 
	and $\beta_i = r_i - t_i/2^{s_i}$
	for some positive integers $r_i,t_i,s_i$ with $s_i \ge 3$
	and $t_i < 2^{s_i}$.
	Moreover, denoting by $\mu$ the \emph{Thue-Morse morphism},
	i.e., $\mu(0) = 01$ and $\mu(1) = 10$,
	assume that $\delta^{t_i}\mu^{s_i}(0)$
	begins with $00$.
	By \cite[Lemma 5]{currie2008each},
	the word $\delta^{t_i}\mu^{s_i}(0^{r_i})$
	has critical exponent in $[\beta_i,\alpha)$,
	for any $i \in \mathbb{N}$.
	For any word $w$ (here some technical details are omitted),
	let us denote by
	$\phi_i(w)$ the word given by
	$\delta^{t_i}\mu^{s_i}(0^{r_i}w)$.
	Then, by construction and by \cite[Lemma 5]{currie2008each},
	the sequence of words $(w_n)_{n \in \mathbb{N}}$
	defined as 
	\[
		w_n = \phi_1(\phi_2(\dots (\phi_n(\epsilon)))),
	\]
	is such that $\ell(w_n)$ is diverging to infinity
	and $w_n$ is a prefix of $w_{n+1}$,
	so that the word $z = \lim_{n \to \infty}w_n$
	is well defined.
	Moreover, $E(z) \ge E(w_n)$ for every $n$
	and, at the same time, $E(p) < \alpha$ for every 
	prefix $p$ of $z$, implying that 
	$E(z) = \alpha$.

	By the previous construction, 
	we have that $z$ attains its critical exponent
	only ``at the infinity'', meaning that 
	$E(\delta^n(z)) = \alpha$ for any $n \in \mathbb{N}$.
	Moreover, $z$ always begins with $00$
	and it doesn't contain any $000$, since it has been obtained 
	through an iterative application of the Thue-Morse morphism.
	As a consequence, $z$ starts with $001$.
\end{remark}

\begin{proof}[Proof of Proposition~\ref{prop:w-subword-TM}]
	Since $w$ is a subword of $\tau$,
	there exists a prefix of the Thue-Morse sequence
	that contains $w$ as a subword.
	Then, the thesis can be obtained 
	by showing that for any $\alpha  \ge 2$ and
	for any finite prefix $p \in \{0,1\}^*$
	of the Thue-Morse sequence
	there exists a word $\tilde{y}$ that starts with $p$,
	$E(\tilde{y}) = \alpha$ and $E(\delta^n(\tilde{y})) = \alpha$
	for any $n \in \mathbb{N}$.
	Indeed, if the previous statement holds,
	the word $y$ can be selected by properly applying 
	$\delta$ to $\tilde{y}$.

	If $\alpha = 2$, this is trivial by considering 
	$\tilde{y} = \tau$.
	Otherwise, $\tilde{y}$ can be constructed 
	by choosing properly $\beta_1$
	in the construction given in Remark~\ref{rem:currieConstruction}.
	Indeed, let us choose $r_1 = 2$,
	$s_1 \ge 3$ such that $2^{s_1} > \ell(p)$
	and $t_1 = 5$,
	so that $\delta^{t_1}\mu^{s_1}(0^{r_1})$
	starts with $00$.
	Moreover, $s_1$ has to be chosen 
	so that $\beta_1 = r_1 - t_1/2^{s_1} < \alpha$.
	Denoting by $\tau_{(s_1)}$
	the prefix of $\tau$ with length $2^{s_1}$,
	we have that $p$ is a prefix of $\tau_{(s_1)}$.
	Moreover, we have that 
	$\mu^{s_1}(0^{r_1}) = \mu^{s_1}(00)
	= \tau_{(s_1)}\tau_{(s_1)}$,
	so that $\phi_1(\epsilon) = \delta^{t_1}\mu^{s_1}(00)
	=\delta^{5}(\tau_{(s_1)})\tau_{(s_1)}$.
	Therefore, choosing $\bar{n} = 2^{s_1} - t_1$,
	we have that 
	$\delta^{\bar{n}}\phi_1(\epsilon) = \tau_{(s_1)}$.
	Let $z$ be a word obtained through the construction 
	of Remark~\ref{rem:currieConstruction}
	with $\beta_1 = r_1 - t_1/2^{s_1}$.
	Then we have that 
	$\tilde{y} = \delta^{\bar{n}}(z)$
	starts with $p$ and,
	being a suffix of $z$, we also have
	that $E(\tilde{y}) = \alpha$
	and $E(\delta^n(\tilde{y})) = \alpha$ for any $n \in \mathbb{N}$,
	as required.
\end{proof}

 While the validity of Conjecture~\ref{conj:ourCon} remains uncertain,
we present the following weaker result,
which will prove useful in our subsequent analysis.

\begin{proposition}
	\label{prop:weakTheorem}
	For any $w\in \{0,1\}^*$,
	we have
	$[\ell(w),+\infty]\subset E(\mathcal{P}(w))$.
\end{proposition}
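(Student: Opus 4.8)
The plan is to treat the endpoint $\alpha=+\infty$ separately and then realize every finite value $\alpha\in[\ell(w),+\infty)$ by gluing $w$ to a Currie--Rampersad word of critical exponent $\alpha$, the whole point of the hypothesis $\alpha\ge\ell(w)$ being to absorb every repetition that meets the prefix $w$. First I would dispose of $\alpha=+\infty$ by taking $y$ so that $wy$ is the periodic word $ww w\cdots$ (i.e.\ $wy=w^{\omega}$): the factor $w^{k}$ has length $k\,\ell(w)$ and period at most $\ell(w)$, hence is at least a $k$-power, so $E(w^{\omega})=+\infty$. For finite $\alpha$, set $n=\ell(w)$ and recall the elementary bound $E(w)\le \ell(w)=n\le\alpha$, which holds because any repetition contained in a word of length $n$ has exponent at most $n$ (with equality only for $w=0^{n}$ or $w=1^{n}$). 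Since appending letters can never destroy the factors already present in $w$, we have $E(wy)\ge E(w)$ for every $y$, so the content of the statement is that, as soon as we are allowed $\alpha\ge n$, the unavoidable contribution of $w$ does not force the exponent above the target.

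For the construction I would take a word $t\in\{0,1\}^{\omega}$ produced as in Remark~\ref{rem:currieConstruction}, so that $E(t)=\alpha$, $t$ avoids $000$, begins with $001$, and attains its critical exponent only at infinity (every finite prefix of $t$ has exponent strictly below $\alpha$). Replacing $t$ by its bitwise complement if necessary, I may assume that the first letter of $t$ differs from the last letter of $w$; I then interpose a long overlap-free separator $s$ (a sufficiently long prefix of $\tau$, or of its complement, chosen so that its first letter also breaks the run at the junction with $w$) and set $z=w\,s\,t\in\mathcal{P}(w)$. The claim is that $E(z)=\alpha$, which I would prove by classifying the factors of $z$. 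Those lying inside $t$ give supremum of exponents exactly $\alpha$, so $E(z)\ge\alpha$ and $t$ contributes precisely $\alpha$. Those lying inside $w$ or inside $s$ have exponent at most $\max\{E(w),E(s)\}\le\max\{n,2\}=n\le\alpha$, using $E(w)\le n$ and the fact that $s$, being a factor of $\tau$, is overlap-free. What remains is to bound the exponent of the factors straddling the two junctions $w\mid s$ and $s\mid t$.

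The straddle estimate is the heart of the matter and the step I expect to be the main obstacle. A repetition of period $q$ that meets $w$ has its $w$-part equal to a $q$-periodic suffix of $w$, whose exponent is at most $E(w)\le n$; the run-breaking choice of boundary letter prevents a short-period repetition from being prolonged across $w\mid s$, and the overlap-freeness of $s$ caps the $s$-part of any straddle. The genuinely delicate point is the $s\mid t$ junction, where one must ensure that a near-$\alpha$ periodic prefix of $t$ cannot be prolonged leftward into $s$: this is exactly where the specific structure of the word of Remark~\ref{rem:currieConstruction} must be invoked, since the high-exponent repetitions of $t$ occur only at infinity, so the maximal periodic prefix of $t$ has exponent bounded away from $\alpha$, and any admissible left extension by less than one period raises the exponent by less than the slack that $\alpha\ge n$ provides. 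Assembling the three regions then yields $E(z)=\alpha$, and since $z$ begins with $w$ we get $\alpha\in E(\mathcal{P}(w))$.

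Finally I would note that when $n\le 2$ the substantive range is $[2,+\infty]$ (no infinite binary word has critical exponent below $2$), and every binary word of length at most $2$ is a factor of $\tau$, so this range is already covered by Proposition~\ref{prop:w-subword-TM}. This lets me restrict the delicate straddle analysis to $n\ge 3$, where the crude bounds above (straddle exponents controlled by $E(w)+O(1)$) have enough room to stay below the target, and where the complement adjustment together with the overlap-free separator genuinely matter, as the example $w=0^{n}$ with $\alpha=n$ already shows.
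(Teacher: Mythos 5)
Your overall strategy (treat $\alpha=+\infty$ via $w^{\omega}$, reduce $\ell(w)\le 2$ to Proposition~\ref{prop:w-subword-TM}, then glue $w$ to a Currie--Rampersad word $t$ with $E(t)=\alpha$ across a separator and bound the straddling factors) is the right shape, but the step you yourself flag as ``the heart of the matter'' --- the $s\mid t$ junction --- is genuinely not proved, and with your choice of separator it in fact fails. An overlap-free $s$ only guarantees that the part of a $q$-periodic straddling factor lying in $s$ has length at most $2q$, so the best you get is that a straddle has exponent at most $2$ plus the exponent of the corresponding $q$-periodic \emph{prefix} of $t$; nothing in Remark~\ref{rem:currieConstruction} bounds those prefix exponents by $\alpha-2$ (the construction actually begins with $\delta^{t_1}\mu^{s_1}(0^{r_1})$, which is itself a $\beta_1$-power prefix with $\beta_1$ possibly close to $\alpha$, and in any case the prefix $00$ of $t$ is already a square). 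Concretely, for $\ell(w)=\alpha=3$, taking $s$ to be the prefix $0110100$ of $\tau$ gives $st=\ldots 100\,001\ldots$, which contains the $4$-power $0000$, so $E(wst)\ge 4>\alpha$; your run-breaking condition only constrains the \emph{first} letter of $s$, not its interface with $t$.

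The paper's proof closes exactly this gap with a different separator: it pads $w$ with zeros so that the glued word is $\tilde{w}\,0^{\ell(w)}\,\tilde{y}$, where $\tilde{w}$ is empty or ends in $1$ and $\tilde{y}$ is the Currie--Rampersad word with its leading $00$ stripped (so $\tilde{y}$ starts with $1$ and avoids $000$). The block $0^{\ell(w)}$ has exponent exactly $\ell(w)\le\alpha$, so it costs nothing, and --- this is the key idea your proposal is missing --- it occurs \emph{exactly once} in the whole word. Any factor of exponent $>\alpha\ge\ell(w)\ge 3$ straddling both junctions must contain a cube $vvv$ with $v$ containing a $1$; periodicity would then force a second occurrence of $0^{\ell(w)}$, a contradiction. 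No case analysis on periods or on how far a repetition of $t$ can be prolonged leftward is needed. If you want to salvage your separator-based variant, you would have to prove a quantitative bound on the exponents of the periodic prefixes of the Currie--Rampersad word and control the $s\mid t$ interface letter by letter; the uniqueness-of-$0^{\ell(w)}$ argument avoids all of that.
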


\begin{proof}
	The idea is
	to concatenate
	$w$ with a finite word $0^n$,
	where the value of $n$ will be fixed and determined later, 
	and then append to it an infinite word
	$y$ obtained using the construction in~\ref{rem:currieConstruction}.

	More formally, we proceed as follows.
	If $\ell(w) \le 2$, then the thesis
	holds as a direct consequence 
	of Proposition~\ref{prop:w-subword-TM}.
	Therefore, we assume $\ell(w) \ge 3$ and,
	without loss of generality,
	that the last digit of $w$ is $0$.
	Otherwise, we work with the bitwise negation of $w$
	and then we apply a bitwise negation 
	to the final construction once again.
	Hence, $w$ has the form $\tilde{w}0^k$,
	where $\tilde{w}$ is either the empty word
	or it ends with $1$.
	Let $n = \ell(w) - k $,
	and consider the word
	\[
		w0^n = 
		\tilde{w}0^k\, 0^n
		= \tilde{w}\underbrace{0\dots 0}_{\ell(w) }.
	\]

    For any $\alpha \ge \ell(w)$,
	consider $y \in \{0,1\}^\omega$ to be a word such that $E(y) = \alpha$,
	where this word is obtained through the construction 
	presented in Remark~\ref{rem:currieConstruction},
	hence it starts
	with $001$ and we can write
	\[
		y = 00\tilde{y},
	\]
	where $\tilde{y}$ begins with $1$.
	Moreover, since we are assuming $\ell(w)\ge 3$,
	we notice also that $0^{\ell(w)}$
	appears only once in $\tilde{w}0^{\ell(w)}\tilde{y}$.
	Indeed, $\tilde{w}$ is either the empty word
	or it ends with $1$ and $\ell(\tilde{w}) < \ell(w)$.
	Additionally, this implies 
	\begin{equation}
		\label{eq:weakTheorem-proof1}
		E(\tilde{w}0^{\ell(w)}) = \ell(w).
	\end{equation}
	Since $\tilde{y}$ starts with $1$
	and it does not contain $000$ as a subword,
	we have
	\begin{equation}
		\label{eq:weakTheorem-proof2}
		E(0^{\ell(w)}\tilde{y}) = \alpha.
	\end{equation}

	We claim
	\[
		E(w0^n\tilde{y}) = E(\tilde{w}0^{\ell(w)}\tilde{y}) = \alpha,
	\]
	thus obtaining the thesis due to the arbitrariness of $\alpha$
	and the fact that $n$ is independent of $\alpha$.
	First, let us notice that if $\tilde{w} = \epsilon$,
	then the claim directly follows from~\eqref{eq:weakTheorem-proof2}.
	Hence, from now on let us assume $\tilde{w}\ne \epsilon$.

	Since $E(00\tilde{y}) = \alpha$, 
	we have that
	$E(\tilde{w}0^{\ell(w)}\tilde{y}) \ge \alpha$;
	therefore, looking for a contradiction,
	let us assume that $E(\tilde{w}0^{\ell(w)}\tilde{y}) > \alpha$.
	This means that there exists a subword $p$ of $\tilde{w}0^{\ell(w)}\tilde{y}$
	such that $p=v^tv'$,
	where $v'$ is a prefix of $v$
	and $t+\frac{\ell(v')}{\ell(v)}>\alpha\ge\ell(w)$.
	Notice that in this case, since $\ell(w) \ge 3$,
	also $t$ is greater than or equal to $3$,
	so $vvv$ is a subword of $p$.

	By~\eqref{eq:weakTheorem-proof1}
	and~\eqref{eq:weakTheorem-proof2},
	we have that the first digit of $p$ is in $\tilde{w}$	and the last 
	is in $\tilde{y}$.
	The latter implies that $v$ should contain the digit $1$.
	Therefore, $v$ cannot be a subword of $0^{\ell(w)}$.
	Moreover, since $t\ge 3$ and $0^{\ell(w)}$
	appears only once in $\tilde{w}0^{\ell(w)}\tilde{y}$,
	we have that $0^{\ell(w)}$ cannot be a subword of $v$.

	Since $0^{\ell(w)}$ is a subword of $p = v^tv'$
	and $v$ contains the digit $1$,
	we deduce that 
	$0^{\ell(w)}$ is a subword of $vv$. Using again the fact that $v$ contains the digit $1$,
	we obtain that 
	$vvv$, which is a subword of $p$,
	contains at least twice $0^{\ell(w)}$,
	which is impossible.
\end{proof}

\section{The critical exponent functions and their properties}
\label{sec:properties}
This section reviews the generic critical exponent functions,
denoted by $\kappa_n$,
and their analytical and dynamical properties
with respect to the error of~\cite[Theorem 7]{Critical}. 
In the following, we present the definition of the $2$-critical exponential function, denoted by $\kappa_2$,  which we consider as model case.

\begin{definition}
	The critical exponent function
	$\kappa_2\colon [0,1]\to [0,1]$ is defined as follows:
	\begin{equation*}
		\kappa_2(x) \coloneqq \inf_{u \in \mathcal{L}(w_x)} \frac{1}{E(u)}.
	\end{equation*}
\end{definition}
\begin{remark}
	Using the convention $1/\infty = 0$,
	we can write 
	$\kappa_2(x) = 1/E(w_x)$.
	In other words, for $x\in[0,1]$, $\kappa_2(x)$ is 
	the inverse of the critical exponent 
	of the binary expansion of $x$.
\end{remark}

More generally,  for each $x \in [0,1]$, we can define the critical 
exponent function $\kappa_n\colon [0,1] \to [0,1]$
at $x$ as the inverse of the critical exponent 
of the expansion of $x$ in base $n$.
For any $n \in \mathbb{N}$, $n \ge 2$,
and $x\in[0,1],$  we denote by $w_{x,n}$
the expansion of $x$ in base $n$, namely
$x = (0.w_{x,n})_n$.
Hence, for every integer $n \ge 2$, the function 
$\kappa_n\colon [0,1]\to [0,1]$ is defined by:
\begin{equation*}
	\kappa_n(x) \coloneqq \inf_{u \in \mathcal{L}(w_{x,n})}\frac{1}{E(u)}.
\end{equation*}

\subsection{The model case $\kappa_2$}

Let us consider the properties of the 
function $\kappa_2$ as a model case for 
all the others functions $\kappa_n$.  More precisely, 
we are going to study the results of~\cite{Critical} by showing how the properties of this function vary with respect to the observations made on~\cite[Theorem 7]{Critical}.

Let us denote the set of the zeros of $\kappa_2$ by $C_2$,
and its complementary set by $D_2$, i.e.,
\[
C_2 := \big\{x \in [0,1]: \kappa_2 = 0\big\},
\qquad D_2 \coloneqq [0,1]\setminus C_2.
\]
Based on the presentation in~\cite{Critical},
we have the following results,
which do not rely on~\cite[Theorem 7]{Critical}:
\begin{itemize}
	\item the function $\kappa_2$ is upper semi-continuous,
		and therefore it is of Baire class $1$
		(cf.~\cite[Proposition 11]{Critical}),
		i.e., it can be obtained as the pointwise limit
		of a sequence of continuous functions;
	\item  the set of points of continuity of the function $\kappa_2$
		coincides with the set $C_2$.
		(cf.~\cite[Proposition 12]{Critical});
	\item the set $C_2$ is co-meagre  
		and it has full Lebesgue measure
		(cf.~\cite[Proposition 14]{Critical});
	\item both $C_2$ and $D_2$ are dense in $[0,1]$,
		and they are neither closed nor open
		(cf.~\cite[Proposition 15 and Proposition 16]{Critical});
\end{itemize}

A result heavily dependent on~\cite[Theorem 7]{Critical}
is~\cite[Proposition 19]{Critical},
which states that if $\kappa_2(x) \ne 0$,
then $[0,\kappa_2(x)[$ is a subset of the 
left or right range of $\kappa_2$ at $x$,
which are the sets of values 
the function $\kappa_2$ assume on every open
left or right neighbourhood of $x$, respectively.
More formally, 
if we denote them by $R^-(\kappa_2,x)$ and $R^+(\kappa_2,x)$,
they are defined as follows:
\begin{align*}
	R^-(\kappa_2,x)\coloneqq \left\{
		\alpha \in \mathbb{R}:\kappa_2^{-1}(\alpha)\cap(x-\delta,x)\ne\emptyset,
		\forall\delta>0
	\right\},\\
	R^+(\kappa_2,x)\coloneqq \left\{
		\alpha \in \mathbb{R}:\kappa_2^{-1}(\alpha)\cap(x,x+\delta)\ne\emptyset,
		\forall\delta>0
	\right\}.
\end{align*}
With this notation,~\cite[Proposition 19]{Critical}
ensured that if $\kappa_2(x) = 0$,
then both $R^-(\kappa_2,x)$ and $R^+(\kappa_2,x)$
contain only $0$.
On the other hand,
if $\kappa_2(x) > 0$
then at least one of the following holds:
\[
	[0,\kappa_2(x)[ \subset R^-(\kappa_2,x)
	\quad\text{or}\quad
	[0,\kappa_2(x)[ \subset R^+(\kappa_2,x).
\]
It remains uncertain if~\cite[Proposition 19]{Critical} holds true or not,
as Conjecture~\ref{conj:ourCon} could be sufficient to prove it.
We remark that the result does not follow directly from the conjecture,
since one has to ensure that for any (finite) prefix $w$ of $w_{x}$
and for any $\alpha \ge \mathcal{E}_{w}$ 
the infinite word $y_{\alpha}$ such that $E(wy_\alpha) = \alpha$
can be chosen so that 
$(0.wy_{\alpha})_2 < x$
(or $(0.wy_{\alpha})_2 > x$) for every $\alpha$.
Otherwise, in principle, some strange phenomena could occur,
such as, for instance, that
$R^-(\kappa_2,x) = [0,\kappa_2(x)] \cap \mathbb{Q}$
while
$R^+(\kappa_2,x) = [0,\kappa_2(x)] \setminus \mathbb{Q}$.

 While the full validity of~\cite[Proposition 19]{Critical}
remains uncertain,
the result holds for the special case 
of $x_\tau$,
which is the number whose binary expansion is the 
Thue-Morse sequence, namely
$x_\tau = (0.\tau)_2$.
Indeed, we have the following result.
\begin{lemma}
	\label{lem:leftRange-ThueMorse}
	The interval $[0,1/2)$ is a subset of $R^-(\kappa_2,x_\tau)$.
\end{lemma}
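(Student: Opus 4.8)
The plan is to prove that every $\alpha\in[0,1/2)$ is a left range value at $x_\tau$ by exhibiting, for each such $\alpha$ and each $\delta>0$, a point $z\in(x_\tau-\delta,x_\tau)$ with $\kappa_2(z)=\alpha$. Since $\kappa_2(z)=1/E(w_z)$, this amounts to producing an infinite word $w_z$ that (i) shares a long prefix with $\tau$, so that $z$ is close to $x_\tau$; (ii) first disagrees with $\tau$ by carrying a $0$ where $\tau$ carries a $1$, so that $z<x_\tau$; and (iii) has critical exponent $E(w_z)=1/\alpha$ (with the convention $1/0=+\infty$).

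First I would fix the branching prefix. For $k\in\mathbb{N}$, let $\tau_{(k)}=\mu^k(0)$ be the prefix of $\tau$ of length $2^k$. Since $\mu^k(1)$ is the bitwise negation of $\mu^k(0)$ and $\tau_0=0$, the digit of $\tau$ immediately following $\tau_{(k)}$ equals $1$; hence every infinite word beginning with $\tau_{(k)}0$ represents a number lying in the closure of the cylinder $I_{\tau_{(k)}0}$, which is strictly to the left of $x_\tau$ and within distance $2^{-2^k}$ of it. Choosing $k$ large makes this distance smaller than $\delta$, so requirements (i) and (ii) are met automatically by every $w_z$ of the form $\tau_{(k)}0\,y'$.

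Next I would verify that $\tau_{(k)}0$ is itself a subword of $\tau$, so that Proposition~\ref{prop:w-subword-TM} becomes applicable to it. Because $\tau$ is a fixed point of $\mu$, we have $\tau=\mu^k(\tau)$, and since $\tau$ contains the factor $00$ (for instance $\tau_5\tau_6=00$), the word $\mu^k(0)\mu^k(0)$ occurs in $\tau$; as $\mu^k(0)$ begins with $0$, its prefix $\mu^k(0)\,0=\tau_{(k)}0$ is a subword of $\tau$. For $\alpha\in(0,1/2)$ I then apply Proposition~\ref{prop:w-subword-TM} to the Thue--Morse subword $\tau_{(k)}0$ with target exponent $1/\alpha\ge 2$, obtaining $y'$ with $E(\tau_{(k)}0\,y')=1/\alpha$. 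The word $y'$ produced by the construction of Remark~\ref{rem:currieConstruction} is not eventually constant (its critical exponent is finite), so $z=(0.\tau_{(k)}0\,y')_2$ is non-dyadic, $w_z$ equals $\tau_{(k)}0\,y'$, and $\kappa_2(z)=\alpha$. The case $\alpha=0$ is handled directly by taking $w_z=\tau_{(k)}0\,y'$ with $y'$ having runs of $0$ of unbounded length while not being eventually constant (e.g.\ $y'=0\,1\,00\,1\,000\,1\cdots$), so that $E(w_z)=+\infty$, $z$ is non-dyadic, and $\kappa_2(z)=0$.

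The routine conclusion is that for every $\delta>0$ the point $z$ lies in $(x_\tau-\delta,x_\tau)$ with $\kappa_2(z)=\alpha$, whence $\alpha\in R^-(\kappa_2,x_\tau)$; as $\alpha\in[0,1/2)$ was arbitrary, $[0,1/2)\subset R^-(\kappa_2,x_\tau)$. The main obstacle — and the only step requiring genuine combinatorial care — is item (iii): guaranteeing that the prepended block $\tau_{(k)}0$ does not interact with the constructed tail to raise the critical exponent above $1/\alpha$. This is exactly what Proposition~\ref{prop:w-subword-TM} provides, and the crux of the argument is the observation that $\tau_{(k)}0$, although obtained by branching off $\tau$ downward, is nonetheless a subword of $\tau$, so the proposition applies. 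A secondary point to check is the dyadic-expansion convention, which forces us to keep the tail $y'$ from ever being eventually constant, so that $w_z$ is the genuine binary expansion of $z$.
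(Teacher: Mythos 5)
Your proof is correct and follows essentially the same route as the paper's: branch off $\tau$ with the prefix $\tau_{(k)}0$, observe that this word is a factor of $\tau$ (because $\tau_{(k)}\tau_{(k)}$ occurs in $\tau$ and $\tau_{(k)}$ begins with $0$), and invoke Proposition~\ref{prop:w-subword-TM} to realize every exponent $1/\alpha \ge 2$ on the resulting left cylinder. The only differences are cosmetic: you obtain the occurrence of $\tau_{(k)}\tau_{(k)}$ from $\tau=\mu^k(\tau)$ together with the factor $00$, whereas the paper reads it off the bitwise-negation block decomposition, and you additionally spell out the $\alpha=0$ case and the non-eventually-constant (dyadic expansion) caveat, which the paper leaves implicit.
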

\begin{proof}
	Recalling that for any $n \in \mathbb{N}$ the 
	$(2^n + 1)$-th digit of $\tau$ is $1$
	(see Remark~\ref{rem:ThueMorse-bitwiseConstruction}),
	for any $\epsilon > 0$
	there exists $m \in \mathbb{N}$
	such that 
	$I_{\tau_{(m)}0} \subset (x_\tau - \epsilon,x_\tau)$,
	where $\tau_{(m)}$ is the prefix of length $2^m$ of 
	the Thue-Morse sequence and $I_w$ is the open interval 
	characterized by $w$.
	Indeed, for any $\epsilon > 0$
	there exists a positive integer $m$
	such that $I_{\tau_{(m)}} \subset (x_\tau - \epsilon,x_\tau + \epsilon)$;
	moreover if 
	$y \in I_{\tau_{(m)}0} \subset I_{\tau_{(m)}}$,
	then 
	\[
		y = (0.\tau_{(m)}0\ldots)_2
		< (0.\tau_{(m)}1\, \tau_{2^m + 2} \tau_{2^m + 3}\dots)_2
		= (0.\tau)_2 = x_\tau,
	\]
	hence $I_{\tau_{(m)}0} \subset (x_\tau-\epsilon,x_\tau)$.

	By Proposition~\ref{prop:w-subword-TM}, it suffices to show 
	that $\tau_{(m)}0$ is a subword of $\tau$.
	Using again the construction of $\tau$
	given in Remark~\ref{rem:ThueMorse-bitwiseConstruction},
	and denoting by $\overline{\tau_{(m)}}$
	the bitwise negation of $\tau_{(m)}$,
	we have that $\tau$ starts as follows:
	\[
	 \tau = \tau_{(m)}\,
	 \overline{\tau_{(m)}}\,
	 \overline{\tau_{(m)}}\,
	 \tau_{(m)}\,
	 \overline{\tau_{(m)}}\,
	 \tau_{(m)}\,
	 \tau_{(m)}\,
	 \overline{\tau_{(m)}}\,
	 \dots
	 \, .
	\]
	As a consequence, $\tau_{(m)}\tau_{(m)}$
	is a subword of $\tau$, ans since $\tau_{(m)}$
	starts with $0$,
	also $\tau_{(m)}0$ is a subword of $\tau$,
	and this ends the proof.
\end{proof}

Lemma~\ref{lem:leftRange-ThueMorse} is sufficient to restore some properties
that in~\cite{Critical} were proved using ~\cite[Proposition 19]{Critical},
such as the existence of almost fixed points and fixed points of $\kappa_2$.
We recall that an almost fixed point of a function is a point that belongs
to the interior of the left or right range of the function at that point.
More formally, we have the following definition.
\begin{definition}
A point $x\in [0,1]$ is an \emph{almost fixed point}
of $\kappa_2$ if and only if
\[
	x\in \mathrm{Int}(R^-(\kappa_2,x))\cup\mathrm{Int}(R^+(\kappa_2,x)).
\]
We denote the set of almost fixed points of $\kappa_2$
as $\mathrm{aFix}(\kappa_2)$.
\end{definition}

In~\cite{Critical}, a characterization of the fixed points was given,
namely $x \in \mathrm{aFix}(\kappa_2)$ if and only if $\kappa_2(x) > x$
(cf.~\cite[Corollary 24]{Critical}).
However, this result relies on~\cite[Proposition 19]{Critical},
hence its validity is no longer ensured.
As a consequence, the uncountability of $\mathrm{aFix}(\kappa_2)$
can't be ensured (cf.~\cite[Corollary 25]{Critical}).
The only thing we can conclude,
by employing the upper semi-continuity of $\kappa_2$,
is that if $x\in [0,1]$ is an almost fixed point of $\kappa_2$,
then $\kappa_2(x) > x$.
However, based on Lemma~\ref{lem:leftRange-ThueMorse},
we can establish that $x_\tau \in \mathrm{aFix}(\kappa_2)$.
Indeed, since $x_\tau < \kappa_2(x_\tau) = 1/2$,
we have that
$x_\tau \in (0,1/2) \subset \mathrm{Int}(R^-(\kappa_2,x))$.
This is also sufficient to show 
the existence of a fixed point of $\kappa_2$.
Indeed,~\cite[Corollary 26]{Critical}
stated that, for any $x_0 \in \mathrm{aFix}(\kappa_2)$
and $\epsilon > 0$, there exists a fixed point of $\kappa_2$
in $(x_0 - \epsilon,x_0 + \epsilon)$.
This is not correct, 
but the same proof of~\cite[Corollary 26]{Critical}
shows the following result.
\begin{corollary}
	\label{cor:fixedPoint}
	For any $\epsilon > 0$, there exists $z \in (x_\tau -\epsilon,x_\tau)$
	such that $\kappa_2(z) = z$.
\end{corollary}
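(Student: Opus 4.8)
The plan is to exploit the almost-fixed-point structure established by Lemma~\ref{lem:leftRange-ThueMorse} to produce, in every left neighbourhood of $x_\tau$, a point whose $\kappa_2$-value equals the point itself. Since $\kappa_2(x_\tau)=1/2$ and $x_\tau<1/2$, Lemma~\ref{lem:leftRange-ThueMorse} tells us that $[0,1/2)\subset R^-(\kappa_2,x_\tau)$, so in particular the value $x_\tau$ is attained on every left neighbourhood of $x_\tau$. More usefully, I would pick a target value $\alpha_0$ slightly below $x_\tau$ and find, arbitrarily close to $x_\tau$ from the left, a point $y$ with $\kappa_2(y)=\alpha_0$. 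The idea is then to compare the function value with the identity: at $x_\tau$ itself we have $\kappa_2(x_\tau)=1/2>x_\tau$, whereas at a point $y$ where $\kappa_2(y)=\alpha_0<x_\tau\le y$ we have $\kappa_2(y)<y$. This sign change of $\kappa_2(x)-x$ is the lever that should force a fixed point in between.

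The main subtlety is that $\kappa_2$ is wildly discontinuous, so one cannot simply invoke the intermediate value theorem. Instead I would rely on the fact, recorded in the excerpt, that $\kappa_2$ is upper semi-continuous and that its points of continuity are exactly the zero set $C_2$, which is co-meagre and of full measure. The strategy is to work inside a small left-interval $(x_\tau-\epsilon,x_\tau)$ and locate a point $z$ in it at which $\kappa_2(z)=z$ by a selection or limiting argument: using that $[0,1/2)\subset R^-(\kappa_2,x_\tau)$, for each value $\alpha\in[0,1/2)$ there are points accumulating at $x_\tau$ from the left on which $\kappa_2$ equals $\alpha$, and one chooses $\alpha=z$ self-referentially. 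Concretely, I expect the original proof of~\cite[Corollary 26]{Critical} to build a nested sequence of cylinder intervals $I_{w^{(k)}}$ shrinking to a point $z$, at each stage prescribing a prefix of $w_z$ so that the eventual critical exponent forces $\kappa_2(z)=z$; the only change here is that the seed prefix must be taken along the Thue–Morse sequence so that Proposition~\ref{prop:w-subword-TM} (rather than the defective Theorem~7) supplies the required infinite completion with any prescribed critical exponent $\ge 2$.

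Thus the concrete steps I would carry out are: (i) fix $\epsilon>0$ and, using Lemma~\ref{lem:leftRange-ThueMorse}, restrict attention to the window $(x_\tau-\epsilon,x_\tau)$ on which every value in $[0,1/2)$ is realized from the left; (ii) set up a contraction-type fixed-point iteration on cylinder intervals, where prescribing a longer and longer Thue–Morse-compatible prefix $p_k$ of the binary expansion simultaneously pins down both the location of the target point $z=(0.p_k\ldots)_2$ and, via Proposition~\ref{prop:w-subword-TM}, the attainable critical exponent of its tail; (iii) arrange the construction so that the prescribed critical exponent of $w_z$ equals $1/z$, which yields $\kappa_2(z)=z$; and (iv) verify that $z$ indeed lies strictly to the left of $x_\tau$ inside the chosen window, using that the prefixes are extended by a $0$ exactly as in the proof of Lemma~\ref{lem:leftRange-ThueMorse}. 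The hard part will be step (iii): closing the self-referential loop so that the geometric constraint $z=(0.p_k\ldots)_2$ and the combinatorial constraint $E(w_z)=1/z$ are satisfied simultaneously in the limit. This is precisely where the original argument used~\cite[Proposition 19]{Critical}; I would replace that input by Proposition~\ref{prop:w-subword-TM}, which is available because the controlling prefixes are subwords of $\tau$, and then pass to the limit of the nested intervals, invoking upper semi-continuity of $\kappa_2$ to control the value at the limit point.
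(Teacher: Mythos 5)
Your opening move is the same as the paper's: Lemma~\ref{lem:leftRange-ThueMorse} shows that $[0,1/2)\subset R^-(\kappa_2,x_\tau)$ while $\kappa_2(x_\tau)=1/2>x_\tau$, so $x_\tau$ is an almost fixed point with the left-range property, and the paper then simply observes that the proof of \cite[Corollary 26]{Critical} runs verbatim with $x_0=x_\tau$ (restricted to left neighbourhoods). Up to that point you are on track, modulo a slip: for $y\in(x_\tau-\delta,x_\tau)$ with $\kappa_2(y)=\alpha_0$ you want $\alpha_0<y<x_\tau$ (take $\delta<x_\tau-\alpha_0$), not ``$\alpha_0<x_\tau\le y$''.

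The problem is that you never actually close the argument, and you say so yourself: step (iii) is left open. Two concrete issues. First, the sign change of $\kappa_2(x)-x$ plus upper semi-continuity is not by itself enough: u.s.c.\ gives that $F=\{x:\kappa_2(x)\ge x\}$ is closed, so one can take $z=\min\bigl(F\cap[y,x_\tau]\bigr)$ with $\kappa_2<\mathrm{id}$ on $[y,z)$, but to exclude $\kappa_2(z)>z$ one needs a left-range property \emph{at $z$}, not at $x_\tau$; this is exactly the delicate point that the proof of \cite[Corollary 26]{Critical} handles and that your sketch does not address. Second, your proposed replacement mechanism --- iterating Proposition~\ref{prop:w-subword-TM} along nested cylinders $I_{p_k}$ to force $E(w_z)=1/z$ ``self-referentially'' --- does not work as stated: Proposition~\ref{prop:w-subword-TM} requires the prefix to be a subword of the Thue--Morse sequence, and after the first stage the extended prefixes $p_k$ (which incorporate a tail produced by the Currie--Rampersad construction) are in general no longer subwords of $\tau$, so the hypothesis fails at every subsequent stage; moreover, updating the target exponent $1/(0.p_k\ldots)_2$ changes the tail and hence the point, and you give no convergence argument for this loop. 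As it stands the proposal is a plan with the decisive step missing, not a proof.
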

This result, which may appear a bit surprising, ensures the existence of
points that coincide with the inverse 
of the critical exponent of their binary expansion,
and they are (at least) countably many,
allowing~\cite[Corollary 27]{Critical} to still hold true.

\bigskip
The analysis in~\cite{Critical} continued
by considering the topological entropy of $\kappa_2$.
We recall that the topological entropy $h$ of an interval map $f$ can be
defined as follows
(see, e.g., \cite{bowen1971entropy,dinaburg1970correlation}):
\begin{equation}
	h(f)=\lim_{\varepsilon \to 0} \left( \limsup_{n \to \infty}  \frac{1}{n} \log \left| (n,\varepsilon) \right| \right),
	\label{entropy_}
\end{equation}
where $\left| (n,\varepsilon) \right|$ is
the maximum cardinality of an $\varepsilon$-separated set
in the following metric $d_n$:
\begin{equation} \label{N_metric}
	d_n(x_1,x_2) \coloneqq
	\max\big\{\left| f^i(x_1)-f^i(x_2)\right| : 0\le i \le n\big\}.
\end{equation}
In other words, the topological entropy can be interpreted as an estimate of the exponential increase rate of the number of topologically distinguishable orbits
when the resolution power diverges.

Specifically, in~\cite[Proposition 31]{Critical} it was shown that 
$\kappa_2$ has infinite topological entropy
(cf.~\cite[Proposition 31]{Critical}).
This is a direct consequence of the existence
of horseshoes of every order
(cf.~\cite[Proposition 30]{Critical}).
The latter result that can be recovered
by using Proposition~\ref{prop:w-subword-TM},
instead of the (now uncertain)~\cite[Proposition 19]{Critical}.
For a clearer understanding,
we report here the revised proof.

\begin{proposition}[Proposition 30 of \cite{Critical}]
	\label{prop:horse}
	The function $\kappa_2$ has an $m$--horseshoe for any $m \ge 2$,
	i.e.,
	there exists an interval $J\subset [0,1]$
	and a family of pairwise closed disjoint intervals
	$I_1,\dots,I_m \subset J$ such that
	$J \subset \kappa_2(I_k)$,
	for all $k = 1,\dots,m$.
\end{proposition}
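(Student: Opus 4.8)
The plan is to construct, for each target order $m\ge 2$, an explicit interval $J$ together with $m$ pairwise disjoint closed subintervals $I_1,\dots,I_m$ on which $\kappa_2$ surjects onto $J$. The natural candidate for $J$ is a small interval of values like $[0,1/2)$-type range, since Proposition~\ref{prop:w-subword-TM} guarantees that whenever a finite word $u$ is a subword of the Thue--Morse sequence, every value $\alpha\ge 2$ is realized as $E(uy)=\alpha$ for some infinite suffix $y$, hence every $\kappa_2$-value in $(0,1/2]$ is attained by a point whose binary expansion begins with $u$. The key is therefore to find $m$ disjoint cylinder intervals $I_1,\dots,I_m$, each characterized by a finite binary prefix that is a subword of $\tau$, such that $\kappa_2$ takes on each $I_k$ the full spread of values filling a common target interval $J$.

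First I would fix a length scale: choose a prefix $\tau_{(s)}$ of the Thue--Morse sequence long enough that inside the cylinder $I_{\tau_{(s)}}$ one can isolate $m$ subcylinders $I_{\tau_{(s)}v_1},\dots,I_{\tau_{(s)}v_m}$ determined by distinct finite blocks $v_1,\dots,v_m$, each chosen so that $\tau_{(s)}v_k$ remains a subword of $\tau$ (for instance by taking the $v_k$ to be successive Thue--Morse blocks, exploiting the self-similar structure in Remark~\ref{rem:ThueMorse-bitwiseConstruction}). These $m$ cylinders are pairwise disjoint by construction and all sit inside the larger cylinder, which will serve to contain the target interval $J$. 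Next I would verify the covering property $J\subset\kappa_2(I_k)$: given a point $x$ in the value interval $J$, I must produce for each $k$ a point in $I_k$ whose $\kappa_2$-value equals the corresponding target. This is exactly where Proposition~\ref{prop:w-subword-TM} enters — since each prefix $\tau_{(s)}v_k$ is a Thue--Morse subword, every value $\alpha\ge 2$ arises as $E(\tau_{(s)}v_k\, y)$ for a suitable $y$, so every $\kappa_2$-value in the half-open range $(0,1/2]$ is hit inside $I_k$; choosing $J$ to be a closed interval of such values contained in all $\kappa_2(I_k)$ simultaneously then yields the horseshoe.

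The main obstacle I anticipate is ensuring that the interval $J$ of \emph{values} (a subset of the range, living in $[0,1]$) is genuinely covered as a set of points by each $\kappa_2(I_k)$ \emph{and} that $J$ itself is positioned as an interval in the domain overlapping the $I_k$ appropriately — in the horseshoe definition $J$ is a subinterval of the domain, and the condition $J\subset\kappa_2(I_k)$ compares a domain-interval with an image set. One must reconcile the geometry: the images $\kappa_2(I_k)$ are subsets of $[0,1]$ of the form (roughly) $(0,1/2]$ together with $0$ on the comeagre complement, and $J$ must be a nondegenerate interval fitting inside this image while also being a legitimate interval of the line on which we can separate the $I_k$. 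I would handle this by taking $J$ to be a closed interval of small positive values, say $[a,b]\subset(0,1/2)$, noting that by Proposition~\ref{prop:w-subword-TM} every value in $(0,1/2]$ lies in $\kappa_2(I_k)$ for each $k$, so $J\subset\kappa_2(I_k)$ holds; the disjointness and the inclusion $I_k\subset J$ (or at least $I_k\subset$ a common host interval) are arranged by choosing $s$ large enough that the cylinders $I_{\tau_{(s)}v_k}$ are short and clustered within the coordinate window corresponding to $J$. The delicate bookkeeping is purely in matching length scales so that both the value-covering and the domain-disjointness hold at once, and I expect that a single sufficiently large choice of $s$ (depending on $m$) resolves it.
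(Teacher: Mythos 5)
Your strategy is the same as the paper's: take $m$ pairwise disjoint closed cylinders whose defining words are factors of the Thue--Morse sequence, invoke Proposition~\ref{prop:w-subword-TM} to conclude that $\kappa_2$ attains every value in $[0,1/2)$ on each of them, and then choose $J$ so that it contains the cylinders and sits inside $[0,1/2)$. The one step you leave as a gesture is precisely the one that requires an explicit construction: producing $m$ \emph{distinct} extensions $\tau_{(s)}v_1,\dots,\tau_{(s)}v_m$ that are all still factors of $\tau$ and give disjoint cylinders. ``Successive Thue--Morse blocks'' does not immediately deliver this, since different occurrences of $\tau_{(s)}$ in $\tau$ may be followed by the same block; one would have to argue (e.g.\ via aperiodicity and uniform recurrence of $\tau$) that a given factor admits at least $m$ distinct right-extensions of a common length. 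The paper fills this gap with a cheaper choice: $I_k$ is the closed cylinder of $\tau_{(k+1)}0$, where $\tau_{(k+1)}$ is the prefix of $\tau$ of length $2^{k+1}$. Each such word is a factor of $\tau$ because $\tau_{(k+1)}\tau_{(k+1)}$ occurs in $\tau$ (Remark~\ref{rem:ThueMorse-bitwiseConstruction}) and $\tau_{(k+1)}$ begins with $0$; the cylinders are pairwise disjoint because the $(2^{k+1}+1)$-th digit of $\tau$ equals $1$ while points of $I_k$ carry a $0$ there, so $I_k$ and $I_{k'}$ with $k'>k$ disagree at that position. The paper also sidesteps your concern about positioning $J$: every $I_k$ lies in $[0,x_\tau]$ and $x_\tau<1/2$, so one may simply take $J=[0,x_\tau]\subset[0,1/2)\subset\kappa_2(I_k)$. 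In short, your plan is viable and follows the paper's route, but it is incomplete at its combinatorial core; the missing step is fillable, and the paper's explicit choice of prefixes is the most economical way to fill it.
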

\begin{proof}
	For any $k = 1,\dots,m$,
	let $\tau_{(k)}$
	be the prefix of 
	length $2^k$ of the Thue-Morse sequence,
	and let $I_k$ be the closed
	cylinder set characterized by $\tau_{(k+1)}0$,
	i.e.,
	\[
		I_k = [(0.\tau_{(k+1)}0)_2,(0.\tau_{(k+1)}1)_2].
	\]
	Therefore, we have 
	\begin{align*}
		\text{if } x \in I_1,
		\text{then } x = &(0.01 10 0\dots)_2,\\
		\text{if } x \in I_2,
		\text{then } x = &(0.01 10 1001 0\dots)_2,\\
		\text{if } x \in I_3,
		\text{then } x = &(0.01 10 1001 10010110 0\dots)_2,
	\end{align*}
	and so forth, recalling that, if $x$ is a dyadic rational,
	we consider its binary expansion whose digits are ultimately $1$.
	Since the $(2^k + 1)$-th digit of the Thue-Morse sequence is 
	$1$ for any $k\ge 1$,
	the intervals $I_k$ are pairwise disjoint
	and if $x\in I_k$ then $x < x_\tau$.
	For any $k\ge 1$, the word $\tau_{(k+1)}0$
	is a subword of $\tau_{(k+1)}\tau_{(k+1)}$,
	which is a subword of the Thue-Morse sequence,
	as one can infer from the bitwise negation construction 
	of $\tau$ given in Remark~\ref{rem:ThueMorse-bitwiseConstruction}.
	According to Proposition~\ref{prop:w-subword-TM},
	$\kappa_2(I_k) = [0,1/2)$
	for any $k = 1,\dots, m$.
	Therefore, by setting $J = [0,x_\tau]$, we obtain the thesis,
	given the generality of $m$.
\end{proof}

We end the exploration of $\kappa_2$
by recovering some results on its dynamical properties.
First, we can ensure the following result
(cf.~\cite[Lemma 32]{Critical}),
whose original proof relied on~\cite[Theorem 7]{Critical},
but a minor revision of it is sufficient to 
demonstrate the claim.
\begin{lemma}
	\label{lem:near0}
	For any
	$n \ge 1$ and 
	$y \in [0,1/2^n]$,
	there exists $x \in [0,1/2^{2^n}]$
	such that $\kappa_2(x) = y$.
\end{lemma}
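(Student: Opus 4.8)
The plan is to reduce the statement to a single application of Proposition~\ref{prop:weakTheorem} with the prefix $w = 0^{2^n}$, after translating the target value of $\kappa_2$ into a target critical exponent. Write $N = 2^n$. Since $\kappa_2(x) = 1/E(w_x)$, requiring $\kappa_2(x) = y$ with $y \in [0,1/2^n]$ is equivalent to requiring a binary expansion with $E(w_x) = 1/y \in [2^n,+\infty]$. The point is that the length of the prefix $0^N$ is exactly $\ell(0^N) = 2^n$, so Proposition~\ref{prop:weakTheorem} gives $[2^n,+\infty] \subset E(\mathcal{P}(0^{2^n}))$. Hence, for the finite case $y \in (0,1/2^n]$, there exists $z = 0^{2^n}y' \in \mathcal{P}(0^{2^n})$ with $E(z) = 1/y$ exactly, and I would set $x = (0.z)_2$.

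Two elementary verifications then complete the argument. First, the prefix $0^{2^n}$ forces $x$ into the prescribed interval: since $w_x$ begins with $2^n$ zeros, one has $x \le \sum_{i \ge 2^n} 2^{-(i+1)} = 1/2^{2^n}$, so $x \in [0,1/2^{2^n}]$. Second, I must check that $z$ really is the binary expansion $w_x$ of $x$, i.e.\ that no dyadic ambiguity intervenes; this is immediate because $E(z) = 1/y < \infty$ implies $z$ is not eventually constant (an eventually constant word has infinite critical exponent), so $x$ is not dyadic and $w_x = z$. Consequently $\kappa_2(x) = 1/E(z) = y$. The endpoint $y = 0$, corresponding to $1/y = +\infty$, I would treat separately and trivially by taking $x = 0$, whose expansion $0^\omega$ has $E(0^\omega) = +\infty$, so that $\kappa_2(0) = 0$ and $0 \in [0,1/2^{2^n}]$; this is consistent with the closed endpoint $+\infty$ in Proposition~\ref{prop:weakTheorem}.

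I do not expect a genuine obstacle here, and the main content is really the correct bookkeeping of exponents: the prefix length $2^n$, the bound $1/2^{2^n}$ on $x$, and the bound $1/2^n$ on $y$ all fit together precisely through the two matching inequalities $\ell(0^{2^n}) = 2^n$ and $1/y \ge 2^n$, which is exactly what makes $w = 0^{2^n}$ the right choice. The only step demanding a line of care is the dyadic caveat just mentioned, which is why I would argue via the finiteness of the critical exponent rather than inspecting the Currie-type word produced in Remark~\ref{rem:currieConstruction}. This revised argument replaces the original dependence on the (false) Theorem~7 of \cite{Critical} by the weaker but sufficient Proposition~\ref{prop:weakTheorem}.
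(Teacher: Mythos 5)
Your proposal is correct and follows essentially the same route as the paper's proof: apply Proposition~\ref{prop:weakTheorem} to the prefix $w=0^{2^n}$, realize $E = 1/y \ge 2^n$ by a word in $\mathcal{P}(0^{2^n})$, and handle $y=0$ separately with $x=0$. Your additional check that $x$ is not dyadic (so that $w_x$ really equals the constructed word) is a small point the paper leaves implicit, but it does not change the argument.
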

\begin{proof}
	If $y=0$,
	then $x=0$.
	If $y\in(0,1/2^n]$, then $1/y\ge 2^n$.
	By Proposition~\ref{prop:weakTheorem},
	we have that $[2^n,+\infty] \subset E(\mathcal{P}(0^{2^{n}}))$.
	Then, there exists $\alpha_y\in\{0,1\}^\omega$
	such that $E(0^{2^{n}}\alpha_y)=1/y$.
	As a consequence, the number $x=(0.0^{2^{n}}\alpha_y)_2$
	is such that $\kappa_2(x)=y$ and $x\in [0,1/2^{2^{n}}]$.
\end{proof}

Lemma~\ref{lem:near0}
and Proposition~\ref{prop:weakTheorem}
lead to the topological mixing property of 
$\bar\kappa_2 \coloneqq \kappa_2|_{[0,1/2]}$.
Indeed, they are sufficient to prove that 
for any open interval $I \subset [0,1]$
there exists $n\in\mathbb{N}$ such that 
$\kappa_2^n(I) = [0,1/2]$
(cf.~\cite[Proposition 33]{Critical}),
from which the topological mixing property
directly follows.

The last result about the properties of $\kappa_2$
was~\cite[Lemma 36]{Critical},
which is about a kind of ``finite-type transitivity''
and ``finite-type Li–Yorke pairs''.
We recall that a point is called \textit{transitive} if its orbit is dense,
and that a \textit{Li-Yorke pair} is a pair of points $(x,y)$ such that
\[
	\limsup_{n \to \infty}\abs{\kappa_2^{n}(x) - \kappa_2^n(y)} > 0
	\quad \text{and} \quad
	\liminf_{n \to \infty}\abs{\kappa_2^{n}(x) - \kappa_2^n(y)} = 0.
\]
In~\cite{Critical}, different steps of the proof of~\cite[Lemma 36]{Critical}
are based on~\cite[Theorem 7]{Critical},
but we are going to show that they can be 
recovered by using Proposition~\ref{prop:w-subword-TM}
and Proposition~\ref{prop:weakTheorem}.

\begin{lemma}[Lemma 36 of \cite{Critical}]
	\label{lem:finite-LiYorke}
	For any $w_{(1)},w_{(2)}\in\{0,1\}^+$
	such that both $(0.w_{(1)})_2$ 
	and $(0.w_{(2)})_2$ are less than $1/2$,
	there exist $x_1,x_2 \in I_{w_{(1)}}$
	and a number $n\in\mathbb{N}$
	such that:
	\begin{itemize}
		\item[(i)]:
			$\kappa_2^{n+2}(x_1),\kappa_2^{n+2}(x_2)\in I_{w_{(2)}}$;
		\item[(ii)]:
			$\kappa_2^{n}(x_2)-\kappa_2^{n}(x_1) > 1/8$.
	\end{itemize}
\end{lemma}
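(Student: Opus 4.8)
The plan is to decouple the two requirements. I would first produce two values $a<b$ in $[0,1/2]$ with $b-a>1/8$ and $\kappa_2^2(a),\kappa_2^2(b)\in I_{w_{(2)}}$, and then pull both back into $I_{w_{(1)}}$ along a common number of steps. Indeed, if $x_1,x_2\in I_{w_{(1)}}$ and $n\in\mathbb{N}$ satisfy $\kappa_2^n(x_1)=a$ and $\kappa_2^n(x_2)=b$, then (ii) is immediate, while $\kappa_2^{n+2}(x_i)=\kappa_2^2(\kappa_2^n(x_i))$ equals $\kappa_2^2(a)$ or $\kappa_2^2(b)$, both in $I_{w_{(2)}}$, giving (i). The ``$+2$'' thus records exactly the two terminal steps that funnel the separated values into the target cylinder.

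For the pullback I would invoke the surjectivity recovered earlier: since $(0.w_{(1)})_2<1/2$, the open cylinder $I_{w_{(1)}}$ lies in $[0,1/2]$, the domain of the mixing map $\bar\kappa_2$, so by \cite[Proposition 33]{Critical} there is $n\in\mathbb{N}$ with $\kappa_2^n(I_{w_{(1)}})=[0,1/2]$. As $a,b\in[0,1/2]$, preimages $x_1,x_2\in I_{w_{(1)}}$ under this single $\kappa_2^n$ exist, which is precisely what (i)--(ii) require.

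The construction of $a$ and $b$ is the core of the argument, and I would route both through one common intermediate value. Since $(0.w_{(2)})_2<1/2$ we have $I_{w_{(2)}}\subset(0,1/2)$; fix $v\in I_{w_{(2)}}$, so that $1/v>2$. By Proposition~\ref{prop:weakTheorem} applied to the prefix $00$ there is $y$ with $E(00y)=1/v$, and $c\coloneqq(0.00y)_2\in(0,1/4)$ satisfies $\kappa_2(c)=v$ and, crucially, $1/c>4$. Applying Proposition~\ref{prop:weakTheorem} twice more --- to the prefix $00$ (length $2\le 1/c$) and to the prefix $011$ (length $3\le 1/c$) --- I obtain $a=(0.00y_a)_2\in(0,1/4)$ and $b=(0.011y_b)_2\in(3/8,1/2)$ with $\kappa_2(a)=\kappa_2(b)=c$. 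Then $a<1/4<3/8<b$ yields $b-a>1/8$, while $\kappa_2^2(a)=\kappa_2^2(b)=\kappa_2(c)=v\in I_{w_{(2)}}$.

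The main obstacle --- and the reason for routing both $a$ and $b$ through the common value $c$ --- is to reconcile two competing demands: $a$ and $b$ must be far apart (forcing realizing prefixes of different lengths, here $00$ and $011$), yet their second iterates must land in the one prescribed cylinder $I_{w_{(2)}}$. Choosing $c$ small, so that $1/c>4\ge 3$, is exactly what lets Proposition~\ref{prop:weakTheorem} realize the longer prefix $011$ while still forcing $\kappa_2(b)=c$; this length bookkeeping is the only delicate point, together with the easily arranged fact that the words produced by the construction underlying Proposition~\ref{prop:weakTheorem} are not eventually periodic, so the reals involved are non-dyadic and their $\kappa_2$-values are read off the displayed expansions. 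The hypotheses $(0.w_{(1)})_2<1/2$ and $(0.w_{(2)})_2<1/2$ enter only to place $I_{w_{(1)}}$ in the domain of $\bar\kappa_2$ and to keep $I_{w_{(2)}}\subset(0,1/2)$, where the target value $v$ is attainable by $\kappa_2$.
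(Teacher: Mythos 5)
Your proof is correct, and its overall skeleton is the same as the paper's: first build two points of $[0,1/2]$ more than $1/8$ apart whose second $\kappa_2$-iterates land in $I_{w_{(2)}}$, then pull both back into $I_{w_{(1)}}$ along a common iterate $n$ using the recovered surjectivity $\kappa_2^n(I_{w_{(1)}})=[0,1/2]$. Where you differ is in the construction of the separated pair. The paper picks \emph{two} targets $z_1=(0.00\alpha_1)_2$ and $z_2=(0.01\alpha_2)_2$ in $\kappa_2^{-1}(I_{w_{(2)}})$ via Proposition~\ref{prop:w-subword-TM} (the prefixes $00$ and $01$ being Thue--Morse subwords), and then pulls them back one more step by two different means: Lemma~\ref{lem:near0} sends $z_1$ to some $y_1<1/16$, while Proposition~\ref{prop:w-subword-TM} with prefix $010$ sends $z_2$ to some $y_2\ge 1/4$. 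You instead route everything through a \emph{single} target $v\in I_{w_{(2)}}$ and a single small intermediate $c\in(0,1/4)$ with $\kappa_2(c)=v$, and then use only Proposition~\ref{prop:weakTheorem}, exploiting $1/c>4$ to satisfy its length hypothesis for both prefixes $00$ and $011$; this yields the same separation $b-a>3/8-1/4=1/8$. Your variant is slightly more economical in its ingredients (it never needs Proposition~\ref{prop:w-subword-TM} directly, only Proposition~\ref{prop:weakTheorem} plus the mixing statement), and you are right to flag, as the paper does not, that the realizing words have finite critical exponent and hence are not eventually periodic, so the displayed expansions really are the binary expansions used to evaluate $\kappa_2$. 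All length and range checks ($1/v>2$, $1/c>4\ge 3$, $a<1/4<3/8<b$) are sound.
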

\begin{proof}
	Since $(0.w_{(2)})_2 < 1/2$,
	and both $00$ and $01$ are subwords of the Thue-Morse sequence,
	by Proposition~\ref{prop:w-subword-TM}
	there exist $\alpha_1,\alpha_2 \in \{0,1\}^\omega$
	such that,
	setting $z_1 = (0.00\alpha_1)_2$
	and $z_2 = (0.01\alpha_2)_2$,
	we have 
	$\kappa_2(z_1),\kappa_2(z_2)\in I_{w_{(2)}}$.
	Since $z_1 < 1/4$, by Lemma~\ref{lem:near0}
	there exists $y_1 < 1/16$ such that $\kappa_2(y_1) = z_1$.
	By using again Proposition~\ref{prop:w-subword-TM},
	there exists $y_2 = (0.010\dots)_2$
	such that $\kappa_2(y_2) = z_2$.
	Therefore, we have
	$\kappa_2^2(y_1),\kappa_2^2(y_2) \in I_{w_{(2)}}$
	and $y_2 - y_1 > 1/8$,
	and the proof is complete by showing 
	the existence of a positive integer $n$
	such that $\kappa_2^n(I_{w(1)}) = [0,1/2]$.
	By applying Proposition~\ref{prop:weakTheorem}
	to $w = w_{(1)}$,
	we have
	$\kappa_2(I_{w(1)}) = [0,1/\ell(w_{(1)})]$.
	Then, through an iterative procedure involving 
	Lemma~\ref{lem:near0},
	we can identify an $n \in \mathbb{N}$
	such that $\kappa_2^n(I_{w(1)}) = [0,1/2]$,
	thus concluding our argument.
\end{proof}

Summarizing,
after reviewing all the properties of $\kappa_2$ studied in~\cite{Critical},
we find that all of them hold true
except for the following ones:
\begin{itemize}
	\item The property of being left- or right-Darboux
		at every point, as stated in~\cite[Proposition 19]{Critical},
		remains unclear if this property holds or not;
	\item The characterization of the almost fixed points of $\kappa_2$,
		specifically in~\cite[Corollary 24]{Critical},
		and the subsequent uncountability of them
		(cf.~\cite[Corollary 25]{Critical});
	\item The existence of the fixed point can be granted 
		only on any left-neighbourhood of $x_\tau$,
		not in a neighbourhood of any almost fixed point
		(cf.~\cite[Corollary 26]{Critical}
		and~\ref{cor:fixedPoint}).
\end{itemize}

\subsection{On the functions $\kappa_n$}

In~\cite{Critical},
several properties
of the general function $\kappa_n$,
with $n \ge 3$, were
explored by examining its restriction
to the set
\[
	A_2^n\coloneqq\left\{x \in [0,1]: w_{x,n} \in \{0,1\}^\omega\right\},
\]
where $w_{x,n}$ is the $n$--base expansion of $x$,
and by employing~\cite[Theorem 7]{Critical}.
In particular,~\cite[Corollary 42]{Critical},
which plays an analogous role of~\cite[Proposition 19]{Critical}, 
is no longer granted.
However, as for the case of $\kappa_2$,
most of the properties can be regained by 
using Proposition~\ref{prop:w-subword-TM}
and
Proposition~\ref{prop:weakTheorem}.
In particular, we have that 
$x_{\tau,n} = (0.01101001\dots)_n$
is an almost fixed point for $\kappa_n$
and for any $\epsilon > 0$
there exists a fixed point in the interval $(x_{\tau,n}- \epsilon,x_{\tau,n})$.
Thus, there are (at least) countably many
fixed points for $\kappa_n$.
Moreover, with some minor adjustments on the proof,
it can be showed that 
$\kappa_n$ admits horseshoes of every order
(cf. Proposition~\ref{prop:horse}),
so it has infinite topological entropy,

\subsection{On the function $\kappa$}
Finally, in~\cite{Critical} the properties of the function $\kappa\colon [0,1] \to [0,1]$, which was defined  by considering the supremum of $\kappa_n$
among all $n \ge 2$, were studied.  More precisely, 
\begin{definition}
We indicate by $\kappa\colon [0,1] \to [0,1]$ the function given by
\[
	\kappa(x)\coloneqq \sup\left\{\kappa_n(x): n \in \mathbb{N}, n \ge 2\right\}.
\]
\end{definition}

Since~\cite[Theorem 7]{Critical} and strictly related
results were never employed in the analysis of $\kappa$,
all the properties presented in~\cite{Critical} for $\kappa$ hold true.
The most important ones are: 
\begin{itemize}
	\item $\kappa$ is $0$ a.e., hence Lebesgue-measurable;
	\item the points where $\kappa$ is different from $0$ 
		are uncountably many;
	\item $\kappa$ is discontinuous at any point $x\in [0,1]$;
	\item the function $\kappa$ belongs to the Baire class $2$
		(but not to the Baire class $1$),
		i.e., it can be obtained as the pointwise limit
		of a sequence of functions of Baire class $1$.
\end{itemize}


\end{document}